\newcommand{\N}{\mathbb{N}}
\newcommand{\twochildren}[4]{child{node(#1){\null}} child{node(#2){\null}} child{node(#3){\null}} child{node(#4){\null}}}
\newtheorem{theorem}{Theorem}[section]
\newtheorem{definition}[theorem]{Definition}
\newtheorem{lemma}[theorem]{Lemma}
\newtheorem{cor}[theorem]{Corollary}
\newtheorem{conj}{Conjecture}
\title{Distance Sequences of Locally Infinite Primitive Graphs}
\author{Katalin Berlow}
\date{\today}
\begin{document}

\maketitle

\begin{abstract}
        A graph is called \textit{primitive} if its automorphism group acts primitively on the vertex set. In this paper, we prove a classification of the possible distance sequences of locally infinite primitive graphs. In particular we show that if a primitive graph is locally uncountable, the distance sequence is constant until it terminates. We also prove a constraint on the distance sequences of locally finite infinite graphs. 
\end{abstract}

\section{Introduction}\label{sec:Intro}

    W. Pegden in \cite{Pegden} has noted that for locally infinite vertex-transitive graphs, the possible distance sequences are limited. Pegden proved that the distance sequence of a locally infinite vertex-transitive graph must be constant (if infinite) or must be constant until the last entry, where it may decrease (if finite). Constructions were also given, thus proving each sequence occurs. However, these constructions were for the most part imprimitive and thus raising the question: Which sequences occur in the primitive case? In this paper we prove some constraints on the distance sequences of locally infinite primitive vertex-transitive graphs and give constructions for all but one case. 

    Throughout this paper $\Gamma$ will be used to denote a graph, $V(\Gamma)$ its vertex set and $E$ its edge relation. We will begin with some definitions. 

     \begin{definition}
    A graph $\Gamma$ is vertex-transitive if all vertices are equivalent under its automorphism group. 
    \end{definition}
    
    One can think about vertex-transitive graphs as those graphs which look the same from the perspective of any vertex. 
    
     \begin{definition}
    The \textit{distance sequence} of a vertex-transitive graph $\Gamma$ is defined as follows: Given any vertex $v\in V(\Gamma)$, let $d(i)$ be the number of vertices at distance $i$ from $v$. 
    \end{definition}
    
    Note that for a vertex-transitive graph, this sequence $d$ is not dependent on the vertex $v$ to which it refers. 
    
    \begin{definition}
    A graph $\Gamma$ is distance-transitive if for any $n\in \N$, any pair of vertices at distance $n$ is equivalent under the automorphism group.  
    \end{definition}
    
 Let $G$ be a permutation group acting on a set $X$. 

    \begin{definition}
    A $G-$congruence is a $G-$invariant equivalence relation such that $x \equiv y \Leftrightarrow g\cdot x \equiv g \cdot y$ for all $g \in G$. Equivalence classes of the $G$-congruence are called blocks. The sets $\varnothing, \{x\},$ and $X$ are always blocks and are thus considered \textit{trivial blocks}.  
    A set of such blocks is called a \textit{block system}. If all blocks in a block system are trivial, then the block system is trivial.   
    \end{definition}
    
    \begin{definition}
    If there exists a nontrivial block system, then $G$ acts imprimitively on $X$. Otherwise $G$ acts primitively. 
    \end{definition}
    
    \begin{definition} A graph $\Gamma$ is primitive if the automorphism group of $\Gamma$ acts primitively on the vertex set. 
    \end{definition}
    
        Note that all primitive graphs are vertex-transitive. To see this assume $\Gamma$ is a graph which is not vertex-transitive. Let $v$ and $u$ be vertices such that no automorphism sends $v$ to $u$. Let $A$ be the set of vertices which can be sent to $v$ through an automorphism. Then $A$ and $V(\Gamma)\setminus A$ are blocks of imprimitivity. 
        
        Also note that all primitive graphs are connected. This is because if a graph is not connected, then the connected components form blocks of imprimitivity. 

    If $\Gamma$ is a graph with diameter $m$, then for $i<m$ and $v\in V(\Gamma)$ we let $\Gamma_i(v)$ denote the set of vertices at distance $i$ from $v$.

    \begin{definition}
    A graph $\Gamma$ of diameter $m$ is said to be \textit{antipodal} if for any vertex $u$ and for all distinct vertices $v,w\in \Gamma_0(u)\cup \Gamma_m(u)$, we have $d(v,w) = m$.
    \end{definition}
    
    Consider the following theorem of Smith \cite{Smith}.
    
    \begin{theorem}[Smith]\label{thm: smith}
    Let $\Gamma$ be a distance-transitive graph with valency $k>2$ and diameter $>2$. Then $\Gamma$ is imprimitive iff it is bipartite or antipodal. 
    \end{theorem}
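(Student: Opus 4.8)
Below is how I would approach Smith's theorem.

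The plan is to prove the two implications separately, disposing of the easy direction first. If $\Gamma$ is bipartite, then the two sides of its (unique) bipartition give an $\aut(\Gamma)$-invariant partition of $V(\Gamma)$; since the diameter exceeds $2$, each side has at least two vertices, so this is a nontrivial block system and $\Gamma$ is imprimitive. If $\Gamma$ is antipodal, then $u\equiv v\iff d(u,v)\in\{0,m\}$ is $\aut(\Gamma)$-invariant, reflexive, and symmetric, and it is transitive precisely by the definition of "antipodal"; its block at $u$ is $\{u\}\cup\Gamma_m(u)$, which is neither a singleton nor all of $V(\Gamma)$ (some vertex lies at distance $2\notin\{0,m\}$), so again $\Gamma$ is imprimitive.

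For the converse, suppose $\Gamma$ is imprimitive and fix a nontrivial $\aut(\Gamma)$-congruence $\equiv$. Because $\Gamma$ is distance-transitive, $\aut(\Gamma)$ is transitive on the ordered pairs of vertices at each fixed distance, so every invariant relation is a union of distance classes: there is $S\subseteq\{0,1,\dots,m\}$ with $0\in S$ and $u\equiv v\iff d(u,v)\in S$, and nontriviality forces $\{0\}\subsetneq S\subsetneq\{0,\dots,m\}$. The first real step is to determine $S$. Distance-transitivity lets one extend any geodesic out of $u$ by one more step whenever its length is less than $m$; using this I would show that $S\setminus\{0\}$ is closed under addition (when the sum is at most $m$) and under subtracting a smaller element from a larger one. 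Setting $s:=\min(S\setminus\{0\})$ and iterating, every member of $S$ is a multiple of $s$ and every multiple of $s$ in $[0,m]$ lies in $S$; hence $S=\{0,s,2s,\dots\}\cap[0,m]$ with $2\le s\le m$. Observe also that, since $1\notin S$, each block $\Delta_u:=\{v:d(u,v)\in S\}$ is an independent set.

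It remains to treat the three cases $s=m$, $2<s<m$, and $s=2$, which are exhaustive since $2\le s\le m$ and $m>2$. If $s=m$ then $S=\{0,m\}$, and the transitivity of $\equiv$ is exactly the assertion that $\Gamma$ is antipodal. If $s=2$ then $S$ is the set of even numbers $\le m$, and I would conclude that $\Gamma$ is bipartite: the blocks are the even-distance classes, and reading off blocks along a geodesic shows that the block of its $i$-th vertex depends only on $i\bmod 2$; when $\Gamma$ is triangle-free all neighbours of a fixed vertex then lie in a single block, so there are exactly two blocks, the two independent sides of a bipartition, and the general case reduces to this by the classical fact that a distance-transitive graph carrying an automorphism-invariant equivalence relation whose classes are the even-distance classes must be bipartite.

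The heart of the argument, and where I expect the main difficulty, is the case $2<s<m$ — this is exactly where the hypothesis $k>2$ is indispensable (odd cycles show it cannot be dropped). Here I would work with the intersection numbers $c_i,a_i,b_i$ of the distance-regular graph $\Gamma$. Since $s\ge 3$, any two distinct neighbours of a vertex lie at distance $\le 2<s$ and hence in distinct blocks. Consequently, if $w$ is at distance $i$ from $u$ with $i-1\equiv 0\pmod s$, then the $c_i$ neighbours of $w$ at distance $i-1$ from $u$ all lie in the single block $\Delta_u$, so $c_i\le 1$. Taking $i=s+1\ (\le m)$ gives $c_{s+1}\le 1$, and since $1=c_1\le c_2\le\cdots$ we get $c_1=\cdots=c_{s+1}=1$; symmetrically $b_{s-1}\le 1$, and since the $b_j$ are non-increasing with $b_s\ge 1$ (because $s<m$) we get $b_{s-1}=b_s=1$. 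Finally a vertex at distance $s$ from $u$ lies in $\Delta_u$, so none of its neighbours does; hence $a_s=0$ and $k=a_s+b_s+c_s=0+1+1=2$, contradicting $k>2$. This contradiction closes the last case and completes the proof.
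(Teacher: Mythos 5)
The paper itself does not prove this theorem: it cites Smith's article and only re-derives two of his lemmas (\cref{lem:blocks} and \cref{blocksImplyWhole}), which correspond to your observations that an invariant relation is a union of distance classes and that $1\notin S$. Your overall architecture --- the easy direction, the reduction to $S=\{0,s,2s,\dots\}$ via closure under addition and subtraction, and the elimination of $2<s<m$ through the intersection numbers ($c_{s+1}\le 1$, $b_{s-1}\le 1$, $a_s=0$, hence $k=2$) --- is sound and is essentially Smith's own argument; those parts I would accept as written.

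The gap is in the case $s=2$. You handle the triangle-free subcase correctly, but you dispose of the subcase where $\Gamma$ contains a triangle by invoking ``the classical fact that a distance-transitive graph carrying an automorphism-invariant equivalence relation whose classes are the even-distance classes must be bipartite.'' That ``fact'' is precisely the assertion you are trying to prove in this case, so the appeal is circular: ruling out triangles when $s=2$ is a genuine piece of Smith's theorem, not a quotable lemma, and it is at least as much ``the heart'' as the case $2<s<m$. It does admit a short argument you could supply: if $u,v,w$ is a triangle, the blocks $B_u,B_v,B_w$ are pairwise distinct (blocks are independent sets), and since $B_u$ and $B_v$ are disjoint no vertex lies at distance $2$ from both $u$ and $v$; hence every neighbour of $w$ other than $u,v$ is adjacent to $u$ or to $v$. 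Choose $z\in N(w)\cap\Gamma_2(u)$ (nonempty because $b_1\ge 1$); then $z$ is not adjacent to $u$, so it is adjacent to $v$, making $z,v,w$ a triangle, and the same observation applied to it gives $N(z)\subseteq\{v,w\}\cup N(v)\cup N(w)$, a set contained in $\Gamma_0(u)\cup\Gamma_1(u)\cup\Gamma_2(u)$ --- contradicting $b_2\ge 1$, i.e.\ the existence of a neighbour of $z$ at distance $3$ from $u$, which is where the hypothesis on the diameter enters. A secondary point: you tacitly take the diameter $m$ to be finite, whereas the paper applies the theorem to graphs of infinite diameter, for which ``antipodal'' is not even defined. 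You should remark that in that situation $S$ is the set of all multiples of $s$, the case $s=m$ is vacuous, and your arguments for $s\ge 3$ and $s=2$ go through verbatim to yield bipartiteness or $k=2$.
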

    
    Though Smith stated \cref{thm: smith} for finite graphs, his proof in \cite{Smith} works
for infinite graphs as well.

 In \cref{sec: uncountable} we will prove the following theorem.

\begin{theorem}\label{thm uncountable}
    If $\Gamma$ is a locally infinite primitive graph of cardinality $\kappa>\aleph_0$, then it has distance sequence $1,\kappa,\kappa,\dots$ if its diameter is infinite and it has distance sequence $1,\kappa,\dots,\kappa$ where $\kappa$ appears $n$ times if its diameter is $n$. 
    \end{theorem}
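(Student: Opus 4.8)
The plan is to reduce everything to a single claim: that $d(i)=\kappa$ for every $i$ with $1\le i\le\operatorname{diam}(\Gamma)$. Granting this, the two stated forms of the distance sequence follow at once by separating the cases $\operatorname{diam}(\Gamma)=\infty$ and $\operatorname{diam}(\Gamma)=n<\infty$, together with $d(0)=1$ and $d(i)=0$ for $i>\operatorname{diam}(\Gamma)$. (I note in passing that the hypothesis ``locally infinite'' is automatic here: a connected locally finite graph has only countably many vertices, and primitive graphs are connected, so a primitive graph of cardinality $\kappa>\aleph_0$ cannot be locally finite. I will not need to invoke local infiniteness separately.)

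Fix $v\in V(\Gamma)$ and an index $i$ with $1\le i\le\operatorname{diam}(\Gamma)$, so $\Gamma_i(v)\neq\varnothing$. The key device is the auxiliary graph $\Sigma_i$ on the vertex set $V(\Gamma)$ whose edges are exactly the pairs $\{x,y\}$ with $d_\Gamma(x,y)=i$. Since every automorphism of $\Gamma$ preserves $\Gamma$-distances, $\aut(\Gamma)\le\aut(\Sigma_i)$, so $\aut(\Gamma)$ permutes the connected components of $\Sigma_i$; hence ``lying in the same connected component of $\Sigma_i$'' is an $\aut(\Gamma)$-congruence on $V(\Gamma)$, whose blocks are precisely the components of $\Sigma_i$. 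This block system can be trivial only if either every block is a singleton or there is a single block equal to $V(\Gamma)$. The first case is impossible, because $\Gamma_i(v)\neq\varnothing$ means $\Sigma_i$ has at least one edge, so not all components are singletons. As $\aut(\Gamma)$ is primitive, the block system is trivial, and we conclude that $\Sigma_i$ is connected.

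Now I combine connectedness of $\Sigma_i$ with a cardinality count. By vertex-transitivity, $\Sigma_i$ is regular of degree $|\Gamma_i(v)|=d(i)$ and it has $\kappa$ vertices. On the other hand, a connected graph in which every vertex has degree at most $\mu$ has at most $\aleph_0\cdot(\mu+1)=\max(\mu,\aleph_0)$ vertices: starting from one vertex, the ball of radius $r$ is finite when $\mu$ is finite and has size at most $\mu^r=\mu$ when $\mu$ is infinite, and the whole graph is the countable union of these balls. Applying this to $\Sigma_i$ yields $\kappa\le\max(d(i),\aleph_0)$; since $\kappa>\aleph_0$ this forces $d(i)\ge\kappa$, and as $\Gamma_i(v)\subseteq V(\Gamma)$ also $d(i)\le\kappa$, so $d(i)=\kappa$. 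Running this over all admissible $i$ gives the claim, and hence the theorem.

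I expect the only genuine subtlety to be bookkeeping in the cardinal-arithmetic step — handling the sub-case where $d(i)$ is finite or countably infinite separately from the uncountable sub-case — and stating the primitivity step cleanly, in particular the observation that a congruence all of whose blocks are singletons would make $\Sigma_i$ edgeless. Everything else is routine; in spirit this argument uses primitivity only through connectedness of the distance-$i$ relation graph (Higman's criterion), and it never appeals to distance-transitivity.
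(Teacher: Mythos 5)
Your proof is correct, but it takes a more self-contained route than the paper. The paper first invokes Pegden's theorem to get that the distance sequence is $1,\kappa,\dots,\kappa,\alpha$ with $\alpha\le\kappa$ (constant except possibly at the last entry), and then only has to rule out $\alpha<\kappa$ at the final index; it does so by forming the distance-$m$ graph ($m$ the diameter) and observing that its connected components are blocks, while a connected graph of degree $\alpha<\kappa$ on $\kappa>\aleph_0$ vertices is impossible. You run that same core idea --- components of the distance-$i$ relation graph $\Sigma_i$ form an $\aut(\Gamma)$-congruence, so primitivity plus the existence of an edge forces $\Sigma_i$ to be connected --- uniformly at \emph{every} level $i$, and couple it with the cardinality bound that a connected graph of degree at most $\mu$ has at most $\max(\mu,\aleph_0)$ vertices. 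This buys you two things: you never need to cite Pegden's (unpublished) result, and the argument makes transparent that primitivity is used only through connectedness of each distance relation (Higman's criterion), with distance-transitivity playing no role. Your side remarks are also sound: local infiniteness is indeed automatic for a connected graph of uncountable cardinality, and the nonemptiness of $\Gamma_i(v)$ for $i\le\operatorname{diam}(\Gamma)$ follows by taking an initial segment of a geodesic. The only bookkeeping worth spelling out in a final write-up is the regularity of $\Sigma_i$ (degree $d(i)$ at every vertex, by vertex-transitivity) and the two sub-cases of the ball-size estimate, both of which you already flag.
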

    
 We will also give constructions to show that all such sequences occur. In \cref{sec: countable}, we prove the corresponding result for the countable case. This case is more nuanced.

\begin{theorem}\label{thm: countable}
    Let $\Gamma$ be a countable, locally infinite, primitive graph. If $\Gamma$ has infinite diameter, its distance sequence is $1,\aleph_0,\aleph_0,\dots$. If $\Gamma$ has finite diameter then $\Gamma$ has distance sequence $1,\aleph_0,\dots,\aleph_0,a$ for $2\leq a \leq \aleph_0$.  
    \end{theorem}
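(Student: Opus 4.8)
The plan is to get the bulk of the statement from Pegden's theorem on locally infinite vertex-transitive graphs and then extract the extra constraint $a \ge 2$ from primitivity. Since $\Gamma$ is primitive it is vertex-transitive and connected; since it is countable, every $d(i)$ is at most $\aleph_0$; and since it is locally infinite, $d(1) = \aleph_0$. Now apply Pegden's result \cite{Pegden}: the distance sequence of a locally infinite vertex-transitive graph is constant if the diameter is infinite, and constant until its last entry (which may be smaller) if the diameter is finite. Feeding in $d(1) = \aleph_0$, we obtain $1, \aleph_0, \aleph_0, \dots$ when the diameter is infinite, and $1, \aleph_0, \dots, \aleph_0, a$ with $\aleph_0$ occurring $m-1$ times and $a = d(m) \le \aleph_0$ when the diameter is a finite number $m$. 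So it only remains to prove $a \ge 2$ in the finite-diameter case.

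For that I would rule out $a \le 1$. Because $m$ is the diameter, some pair of vertices is at distance exactly $m$, so $a = d(m) \ge 1$ by vertex-transitivity. Suppose for contradiction that $a = 1$, so each vertex $v$ has a unique vertex $v'$ at distance $m$ from it. First check that $(v')' = v$: indeed $v$ lies at distance $m$ from $v'$, and $v'$ has only one such vertex. It follows that the relation declaring $u$ and $w$ equivalent precisely when $u = w$ or $w = u'$ is reflexive, symmetric, and transitive, hence an equivalence relation, and it is $\aut(\Gamma)$-invariant since automorphisms preserve distance and therefore commute with the map $v \mapsto v'$. Each class $\{v, v'\}$ has exactly two elements (as $m \ge 1$ forces $v \ne v'$), so since $\Gamma$ is infinite these classes form a nontrivial block system, contradicting primitivity. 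Hence $a \ge 2$.

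The one genuinely new ingredient here is the antipodal-block argument; the rest is bookkeeping on top of Pegden. The step to watch is the transitivity of the relation when $a = 1$: it is exactly the uniqueness of the distance-$m$ vertex --- which itself uses that the diameter is finite --- that makes ``equal or antipodal'' an equivalence relation, and this is the point a careless write-up would be tempted to skip. If instead one wanted a self-contained proof avoiding Pegden, the main obstacle would be to show directly that $d(i) = \aleph_0$ for each $1 \le i < m$: fix $v$, take $w \in \Gamma_i(v)$, observe that its $\aleph_0$ neighbors lie in $\Gamma_{i-1}(v) \cup \Gamma_i(v) \cup \Gamma_{i+1}(v)$, and run a counting argument together with vertex-transitivity to push infiniteness outward from $\Gamma_1(v)$; but since Pegden's theorem is available this part can simply be quoted.
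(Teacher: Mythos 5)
Your proposal is correct and follows essentially the same route as the paper: quote Pegden's theorem to get the shape of the sequence, then rule out $a=1$ by observing that the "equal or antipodal" relation is an $\aut(\Gamma)$-invariant equivalence relation whose two-element classes form a nontrivial block system. The only cosmetic difference is that you justify transitivity via $(v')'=v$, while the paper notes it holds vacuously; both hinge on the same uniqueness of the distance-$m$ vertex.
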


    We also provide constructions for all cases except for when the distance sequence ends in a prime or 4. This case remains open.

\section{Uncountable Graphs}\label{sec: uncountable}

\begin{theorem}\label{uncountable2}
    If $\Gamma$ is a locally infinite primitive vertex-transitive graph of cardinality $\kappa>\aleph_0$, then it has distance sequence $1,\kappa,\kappa,\dots$ if its diameter is infinite and it has distance sequence $1,\kappa,\dots,\kappa$ where $\kappa$ appears $n$ times if its diameter is $n$. 
    \end{theorem}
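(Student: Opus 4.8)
The plan is to reduce the whole statement to two facts: that for each relevant $i$ the ``distance-exactly-$i$'' graph on $V(\Gamma)$ is connected and vertex-transitive, and that a connected vertex-transitive graph on $\kappa>\aleph_0$ vertices must have valency $\kappa$. Put $G=\aut(\Gamma)$, and for $i\ge 1$ let $\Gamma^{(i)}$ be the graph on vertex set $V(\Gamma)$ in which $x$ is adjacent to $y$ iff $d_\Gamma(x,y)=i$. Since automorphisms preserve distances, $\Gamma^{(i)}$ is invariant under $G$, the group $G$ still acts vertex-transitively on it, and the number of neighbours of any fixed vertex in $\Gamma^{(i)}$ is exactly $d(i)$.

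Next I would show $\Gamma^{(i)}$ is connected whenever it is nonempty, i.e.\ whenever $i$ does not exceed the diameter. Define $x\approx y$ to mean there is a walk $x=x_0,x_1,\dots,x_\ell=y$ with $d_\Gamma(x_j,x_{j+1})=i$ for each $j$ --- equivalently, $x$ and $y$ lie in the same connected component of $\Gamma^{(i)}$. This $\approx$ is an equivalence relation, and since every $g\in G$ preserves the relation ``$d_\Gamma(\cdot,\cdot)=i$'' it also preserves $\approx$; hence $\approx$ is a $G$-congruence, so its classes are blocks and are therefore trivial by primitivity. If $\Gamma^{(i)}$ has an edge then two distinct vertices are $\approx$-related, so the blocks are not all singletons, and thus the unique block is all of $V(\Gamma)$: that is, $\Gamma^{(i)}$ is a connected spanning subgraph. (For $i=1$ this is simply the already-noted fact that primitive graphs are connected; vertex-transitivity ensures $\Gamma^{(i)}$ has no isolated vertices once $i\le\operatorname{diam}(\Gamma)$.)

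Then I would prove the cardinal-arithmetic lemma: if $\Delta$ is a connected vertex-transitive graph with $|V(\Delta)|=\kappa>\aleph_0$, then $\Delta$ has valency $\kappa$. Let $k$ be the valency and $B_r$ the ball of radius $r$ about a fixed vertex. If $k$ were finite, each $B_r$ would be finite and $V(\Delta)=\bigcup_r B_r$ countable, contradicting $\kappa>\aleph_0$; so $k$ is infinite. Then $|B_{r+1}|\le |B_r|+k\cdot|B_r|\le k$ by induction, whence $|V(\Delta)|\le \aleph_0\cdot k=k$, so $\kappa\le k$; together with $k\le |V(\Delta)|=\kappa$ this gives $k=\kappa$. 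Applying this to $\Delta=\Gamma^{(i)}$ yields $d(i)=\kappa$ for every $i$ with $1\le i\le\operatorname{diam}(\Gamma)$. Since $d(0)=1$ and $d$ terminates exactly at the diameter, this is the sequence $1,\kappa,\kappa,\dots$ when the diameter is infinite and $1,\kappa,\dots,\kappa$ with $n$ copies of $\kappa$ when the diameter is $n$. (Incidentally this also recovers that such a $\Gamma$ is automatically locally infinite, via the case $i=1$.)

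I expect the only point needing genuine care is the verification that the component relation $\approx$ of $\Gamma^{(i)}$ is a $G$-congruence in the precise sense of the definition, together with the observation that $\Gamma^{(i)}$ really is spanning so that ``connected'' is the right notion; both come down to vertex-transitivity plus the fact that automorphisms preserve $d_\Gamma$. Everything else is routine, and in contrast with Smith's theorem no distance-transitivity is used and the argument does not care whether the diameter is finite.
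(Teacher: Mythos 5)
Your proposal is correct, and it is more self-contained than the paper's proof while using the same underlying mechanism. The paper first quotes Pegden's theorem to reduce everything to the finite-diameter case with sequence $1,\kappa,\dots,\kappa,\alpha$, and then rules out $\alpha<\kappa$ by precisely your argument applied at the single level $i=m$: if the ``distance exactly $m$'' graph had degree $\alpha<\kappa$ it would be disconnected, and its connected components would form a nontrivial block system. You run that block-system argument uniformly at every level $i\le\operatorname{diam}(\Gamma)$, and you make explicit the cardinal-arithmetic lemma (a connected vertex-transitive graph on $\kappa>\aleph_0$ vertices has valency $\kappa$) that the paper uses only implicitly in the sentence asserting that $\Gamma'$ of degree $\alpha<\kappa$ cannot be connected. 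What your route buys is independence from Pegden's unpublished result and a single treatment of the finite- and infinite-diameter cases; what the paper's route buys is brevity. The two points you flag as needing care are indeed the right ones, and both go through: the component relation of $\Gamma^{(i)}$ is $\aut(\Gamma)$-invariant because automorphisms preserve $d_\Gamma$, and vertex-transitivity of $\Gamma^{(i)}$ guarantees no isolated vertices once some pair realizes distance $i$, so in the disconnected case the components are genuinely nontrivial blocks. I see no gap.
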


It was shown by Pegden in \cite{Pegden}, that if $\Gamma$ is a locally infinite vertex-transitive graph of degree $\kappa$ with infinite diameter then it has distance sequence $1,\kappa,\kappa,\dots$ and if it is of finite degree, then it has distance sequence $1,\kappa,\dots,\kappa,\alpha$ for $\alpha\leq \kappa$. Thus it suffices to show that in the second case, $\alpha$ must equal $\kappa$.

\begin{proof}

Let $\Gamma$ be such a graph with diameter $m$ and distance sequence \\ $1,\kappa,\dots,\kappa,\alpha$ for $\kappa>\alpha$. 
Define a relation $\sim$ on $V(\Gamma)$ by $$v\sim w\iff d(v,w) = m$$. The relation $\sim$ is symmetric, and thus $\Gamma':= (V(\Gamma),\sim)$ is a graph.  

Note that $\Gamma'$ is also vertex-transitive and has degree $\alpha$. Since $|V(\Gamma)| = \kappa > \aleph_0$ and $\Gamma'$ has degree $\alpha<\kappa$, $\Gamma'$ is not connected and has $\kappa$ many connected components.  Let $B:= \{B_\alpha: \alpha< d\}$ be the partition of $V(\Gamma)$ into connected components. 

Let $\varphi$ be an automorphism of $\Gamma$ and $a,b\in B_\alpha$ for some $\alpha < \kappa$. Then there is a path connecting $a$ to $b$ in $\Gamma'$, let $a= a_0, a_1, \dots , a_{k-1}=b$ be this path where $k$ is the distance between $a$ and $b$ in the graph $\Gamma'$. Then for all $i<k-1$, $d(a_i,a_{i+1})=m$. Thus for all $i<k-1$, $d(\varphi(a_i),\varphi(a_{i+1}))=m$ and therefore $\varphi(a_0),\dots,\varphi(a_{k-1})$ is a path in $\Gamma'$. Thus $\varphi(a)$ is connected to $\varphi(b)$ in $\Gamma'$ and therefore are in the same connected component. Thus $B$ is a block system of $\Gamma$ implying $\Gamma$ is imprimitive. 
\end{proof}

Next we will provide constructions to show that for any infinite cardinal $\kappa$, there are graphs with distance sequence $1,\kappa,\kappa,\dots$ and $1,\kappa,\dots,\kappa$. 

\begin{theorem} \label{thm: triangle tree}
There is a primitive graph with distance sequence $1,\kappa,\kappa,\dots$ for any infinite cardinal $\kappa$. 
\end{theorem}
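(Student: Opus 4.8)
The plan is to take the natural ``tree of triangles'' as the graph. Concretely, let $G=\ast_{i<\kappa}\langle s_i\rangle$ be the free product of $\kappa$ copies of the cyclic group of order $3$, and let $\Gamma$ be the Cayley graph of $G$ with connection set $S=\{s_i^{\pm 1}:i<\kappa\}$. Each factor $\langle s_i\rangle$ contributes a triangle through every group element, and because the product is free these triangles are glued along single vertices in a tree pattern; in particular every vertex lies in exactly $\kappa$ triangles and the only cycles of $\Gamma$ are its defining triangles. First I would record the elementary facts: $\Gamma$ is connected and vertex-transitive (it is a Cayley graph), $|V(\Gamma)|=|G|=\kappa$, every vertex has valency $|S|=\kappa$, and the diameter is infinite. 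Then I would compute the distance sequence: moving outward from a vertex, at each step one enters $\kappa$ new triangles, each contributing two new vertices, and since there are no shortcuts through the tree of triangles nothing collapses; hence $|\Gamma_i(v)|=\kappa$ for every $i\ge 1$, so the distance sequence is $1,\kappa,\kappa,\dots$.

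The remaining, and main, task is to show $\Gamma$ is primitive, and here I would route through Smith's theorem (\cref{thm: smith}). The key step is to verify that $\Gamma$ is distance-transitive. This follows from the strong homogeneity of a tree of triangles: a geodesic between two vertices at distance $n$ is essentially unique --- at each step it selects a triangle and then one of that triangle's two other vertices --- so any two pairs of vertices at the same distance have ``the same local picture,'' and that picture can be transported by an automorphism obtained by composing a translation of $G$ with the evident symmetries of the tree of triangles (permuting the triangles attached at a given vertex, and swapping $s_i\leftrightarrow s_i^{-1}$ inside a triangle), rewiring freely beyond the geodesic. I expect this to be the principal obstacle: each individual move is transparent, but phrasing ``same local picture'' precisely and extending the partial map to a full automorphism by induction on distance requires some bookkeeping.

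Once distance-transitivity is established, Smith's theorem applies: $\Gamma$ has valency $\kappa>2$ and diameter $>2$ (it is infinite), so $\Gamma$ is imprimitive if and only if it is bipartite or antipodal. But $\Gamma$ contains triangles, so it is not bipartite, and it has infinite diameter, so it is not antipodal. Hence $\Gamma$ is primitive, which together with the distance computation proves the theorem.

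As an alternative to invoking Smith, one can argue primitivity directly: if a nontrivial block $B$ contains two adjacent vertices, then using triangle-symmetry automorphisms $B$ contains their whole triangle, hence all neighbours of each of its vertices, hence (by connectivity) all of $V(\Gamma)$; and a short argument using that every vertex has $\kappa$ equivalent vertices at each distance rules out $B$ being an independent set. This avoids distance-transitivity but is messier, so I would present the argument through Smith's theorem as the main line.
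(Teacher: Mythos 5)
Your proposal is correct and follows essentially the same route as the paper: the graph is the same tree of triangles (the paper builds it iteratively by attaching $\kappa$ triangles at each finite-degree vertex, you realize it as a Cayley graph of $\ast_{i<\kappa}\mathbb{Z}/3\mathbb{Z}$, which is the same object and gives vertex-transitivity for free), and primitivity is obtained exactly as in the paper via distance-transitivity plus Smith's theorem, ruling out bipartiteness by the triangles and antipodality by the infinite diameter. The step you flag as the main obstacle --- making distance-transitivity precise --- is treated at the same level of informality in the paper (geodesics decompose uniquely into triangles, and one maps triangle to triangle), so your sketch matches the published argument.
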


\begin{center}
\begin{figure}
\tikzstyle{every node}=[circle, draw, fill=black!50,
                        inner sep=0pt, minimum width=4pt]
\begin{tikzpicture}[style=thick,scale=0.65] 
\tikzstyle{every node}=[circle,fill=black!50,inner sep=1pt,draw] 
\tikzstyle{level 1}=[sibling distance=40mm]
\tikzstyle{level 2}=[sibling distance=20mm] 
\tikzstyle{level 3}=[sibling distance=10mm] 
\node(a){\null}[grow'=down] 
child{node(e1){\null} }
child{node(f1){\null} }
child{node(e2){\null} }
child{node(f2){\null} 
\twochildren{c3}{d3}{c4}{d4} }; 
\draw{(e1.west) -- (f1.east)};
\draw{(e2.west) -- (f2.east)};
\draw{(c3.west) -- (d3.east)};
\draw{(c4.west) -- (d4.east)};
\draw{(a.east) -- (10,-1.5)};
\draw{(a.east) -- (12,-1.5)};
\draw{(f2.east) -- (-2,-3)};
\draw{(f2.east) -- (-1,-3)};
\draw{(-1,-3) -- (-2,-3)};
\draw{(10,-1.5) -- (12,-1.5)};
\filldraw[fill=black!50](10,-1.5) circle (.1);
\filldraw[fill=black!50](12,-1.5) circle (.1);
\filldraw[fill=black!50](-2,-3) circle (.1);
\filldraw[fill=black!50](-1,-3) circle (.1);
 \node[fill = white, draw = white] at (13,-1.5) {\dots};
 \node[fill = white, draw = white] at (11,-2.5) {$\ddots$};
 \node[fill = white, draw = white] at (-6,-4) {$\vdots$};
 \node[fill = white, draw = white] at (0,-2.5) {$\dots$};
\end{tikzpicture} 
    \caption{ The Graph $\Gamma^\kappa$}\label{Fig1}
\end{figure}
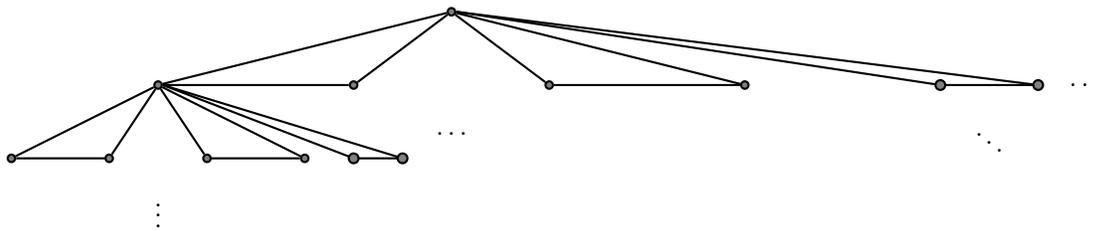
\end{center}
\vspace{-11mm}
\begin{proof}
Let $\kappa$ be an infinite cardinal. We will construct the graph $\Gamma^\kappa$ as follows: 
\vspace{.5mm}

\noindent Step 0: Start with one vertex.

\noindent Step n: To every vertex of finite degree, adjoin $\kappa$ many triangles. 

\vspace{2mm}

The resulting graph after $\omega$ many steps is $\Gamma^\kappa$. (see \cref{Fig1}) It is clear that $\Gamma$ is distance-transitive (and thus also vertex-transitive). To see this, let $n\in \N$ and $u,v,w$ be such that $d(u,v) = d(u,w) = n$. The shortest path connecting $u$ and $v$ can be sent to the shortest path connecting $u$ and $w$. Each edge in this path is part of a unique triangle since otherwise, the path would not be a shortest path. Sending each triangle making up the path from $u$ to $v$ to the corresponding triangle making up the path from $u$ to $w$ would induce a bijection.

Also, note that the graph's valency and diameter are both greater than 2. Thus, (infinite) Smith's Theorem holds. Since its diameter is infinite, $\Gamma^\kappa$ is not antipodal. Since it contains (many) odd cycles, $\Gamma^\kappa$ us not bipartite. Thus $\Gamma^\kappa$ is primitive. 

Since each vertex has degree $\kappa$ and the graph has infinite diameter, given a vertex $v\in V(\Gamma)$ we have $|\Gamma^\kappa_n(v)| = |\kappa \cdot \dots \cdot \kappa| = \kappa$. Thus $\Gamma^\kappa$ is a primitive vertex-transitive graph with distance sequence $1,\kappa,\kappa,\dots$. 
\end{proof}

Next we will give a construction for a primitive vertex-transitive graph with distance sequence $1,\kappa,\dots,\kappa$. In order to do this, we will first re-prove some lemmas of Smith from \cite{Smith}.  

\begin{lemma}[Smith]\label{lem:blocks}
Let $\Gamma$ be a distance-transitive graph with block $B$ and $u \in B$. If $B$ contains a vertex of $\Gamma_i(u)$, then $\Gamma_i(u)\subseteq B$.
\end{lemma}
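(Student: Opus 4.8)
The plan is to show that the distance classes from $u$ are unions of blocks, which follows from distance-transitivity plus the defining property of a congruence. First I would fix the block $B$ containing $u$ and suppose $w \in B \cap \Gamma_i(u)$; the goal is to show an arbitrary $v \in \Gamma_i(u)$ also lies in $B$. Since $\Gamma$ is distance-transitive and $d(u,w) = d(u,v) = i$, there is an automorphism $\varphi$ with $\varphi(u) = u$ and $\varphi(w) = v$.

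Next I would use the fact that $B$ is a block of the automorphism group: for any $g \in \aut(\Gamma)$, either $g(B) = B$ or $g(B) \cap B = \varnothing$. Applying this to $\varphi$: since $u \in B$ and $\varphi(u) = u \in B$, we have $\varphi(B) \cap B \ni u \neq \varnothing$, so $\varphi(B) = B$. Therefore $v = \varphi(w) \in \varphi(B) = B$. Since $v$ was an arbitrary element of $\Gamma_i(u)$, we conclude $\Gamma_i(u) \subseteq B$.

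The main (minor) obstacle is being careful about which formulation of ``block'' is in play: one needs the standard equivalence that a block of a transitive group action satisfies ``$g \cdot B = B$ or $g \cdot B \cap B = \varnothing$ for all $g$,'' which is immediate from the definition of a $G$-congruence given in the excerpt (the equivalence class of $u$ under the congruence is setwise fixed by any automorphism fixing $u$). I would state this as a one-line observation rather than belabor it. A second small point is ensuring the automorphism $\varphi$ realizing distance-transitivity can be taken to fix $u$; this is exactly the content of the definition of distance-transitivity (any pair of vertices at distance $n$ is equivalent under the automorphism group, so $(u,w)$ maps to $(u,v)$), so no extra work is needed there.
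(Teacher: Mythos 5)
Your proof is correct and follows essentially the same route as the paper: fix the block containing $u$, use distance-transitivity to produce an automorphism fixing $u$ and carrying the known element of $B\cap\Gamma_i(u)$ to an arbitrary vertex of $\Gamma_i(u)$, and invoke invariance of $B$ under that automorphism. The only difference is that you spell out the ``$g(B)=B$ or $g(B)\cap B=\varnothing$'' justification, which the paper leaves implicit.
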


\begin{proof}
Let $v\in B\cap \Gamma_i(u)$. Let $w \in \Gamma_i(u)$. Since $\Gamma$ is distance-transitive, there is an automorphism $f$ fixing $u$ and sending $v$ to $w$. Since the block $B$ must be invariant under $f$, we have $w \in B$. Thus $\Gamma_i(u)\subseteq B$. 
\end{proof}

\begin{lemma}[Smith]\label{blocksImplyWhole}
Suppose $\Gamma$ is  distance-transitive, $B$ a block of $\Gamma$, and $u,v\in B$ with $d(u,v)=1$. Then $B = V(\Gamma)$. 
\end{lemma}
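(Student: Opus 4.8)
The plan is to use \cref{lem:blocks} to show that $B$ is closed under taking neighbours of its vertices, and then to invoke connectivity to conclude $B = V(\Gamma)$.

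First, since $v \in B \cap \Gamma_1(u)$, \cref{lem:blocks} (with $i=1$) immediately gives $\Gamma_1(u) \subseteq B$. The key step is to upgrade this to: $\Gamma_1(b) \subseteq B$ for \emph{every} $b \in B$. Since $\Gamma$ is distance-transitive it is in particular vertex-transitive, so there is an automorphism $g$ with $g(u)=b$. As $\aut(\Gamma)$ permutes the blocks of the block system containing $B$, the set $g(B)$ is again a block; it contains $g(u)=b$, and so does $B$, so $g(B)=B$. Applying $g$ to the inclusion $\Gamma_1(u)\subseteq B$ then yields $\Gamma_1(b)=g[\Gamma_1(u)]\subseteq g[B]=B$, as claimed.

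With this in hand, the rest is routine. Since $\Gamma$ is connected, any vertex $w$ can be joined to $u$ by a path $u=w_0,w_1,\dots,w_k=w$; an induction on $j$ (base case $w_0=u\in B$; inductive step $w_{j+1}\in\Gamma_1(w_j)\subseteq B$ using the previous paragraph) shows every $w_j$, and in particular $w$, lies in $B$. Hence $V(\Gamma)\subseteq B$, so $B=V(\Gamma)$.

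I expect the middle step --- showing that $B$ absorbs all neighbours of each of its points --- to be the only real obstacle, and it is precisely there that the block property (rather than mere $G$-invariance) is used: an automorphism carrying one point of $B$ to another point of $B$ must fix $B$ setwise. Everything else is just propagation along edges, which works because the graphs under consideration are connected; one should note that connectivity is genuinely needed, since otherwise a single connected component of a disconnected distance-transitive graph would be a proper block containing adjacent vertices.
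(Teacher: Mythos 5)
Your proof is correct, and it reaches the conclusion by a slightly different route than the paper. The paper's proof absorbs the distance spheres around $u$ one at a time: after getting $\Gamma_1(u)\subseteq B$ and $\Gamma_1(v)\subseteq B$ from \cref{lem:blocks}, it uses distance-transitivity to exhibit a vertex of $\Gamma_2(u)\cap\Gamma_1(v)$, concludes $\Gamma_2(u)\subseteq B$ by another application of \cref{lem:blocks}, and then ``repeats this process'' to get $\Gamma_i(u)\subseteq B$ for every $i$, finishing by connectivity. You instead apply \cref{lem:blocks} only once (with $i=1$) and then upgrade ``$B$ contains all neighbours of $u$'' to ``$B$ contains all neighbours of each of its points'' by combining vertex-transitivity with the defining dichotomy of blocks (an automorphism carrying a point of $B$ to a point of $B$ must fix $B$ setwise), after which an induction along paths closes the argument. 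Your version is somewhat more economical: the paper's ``repeating this process'' quietly needs, at each stage, that $\Gamma_{i+1}(u)\cap\Gamma_1(w)\neq\varnothing$ for a suitable $w\in B$, another appeal to distance-transitivity that is left implicit, whereas your neighbour-closure step handles all stages uniformly and uses distance-transitivity only through vertex-transitivity and \cref{lem:blocks}. The trade-off is that you invoke the block property directly rather than only via \cref{lem:blocks}; both arguments use connectivity in the same essential way at the end, and your closing observation that connectivity is genuinely needed is apt.
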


\begin{proof}
By \cref{lem:blocks}, $\Gamma_1(u)\subseteq B$ and $\Gamma_1(v)\subseteq B$. If $\Gamma$ is the complete graph we are done, so assume not. Then $\Gamma_2(u)$ must be nonempty. Also, $\Gamma_2(u)\cap \Gamma_1(v)$ must be nonempty as well, by distance transitivity.   Let $w \in \Gamma_2(u)\cap \Gamma_1(v)$. Since $w \in \Gamma_1(v)$, $ w \in B$. By \cref{lem:blocks}, thus $\Gamma_2(u)\subseteq B$. Repeating this process gives us $\Gamma_n(v)\subseteq B$ for all $n$. Since $\Gamma$ is connected, thus $B = V(\Gamma)$. 
\end{proof}

Now we will prove the following theorem. 

\begin{theorem}\label{contantKappa}
There is a primitive graph with distance sequence $1,\kappa,\dots,\kappa$ ($\kappa$ appears $n$ times) for any infinite cardinal $\kappa$ and $n\in \N$. 
\end{theorem}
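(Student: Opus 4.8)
The plan is to realize the required graph as the Hamming graph $H(n,\kappa)$. Its vertex set is $\kappa^n$, the set of all $n$-tuples of elements of $\kappa$ (equivalently, functions $\{1,\dots,n\}\to\kappa$), and two vertices are declared adjacent exactly when they differ in a single coordinate; equivalently $H(n,\kappa)$ is the $n$-fold Cartesian product $K_\kappa\,\square\,\cdots\,\square\,K_\kappa$, so that for $n=1$ it is simply $K_\kappa$. A straightforward induction on $n$ shows that the graph distance of two vertices equals their Hamming distance, i.e.\ the number of coordinates on which they disagree. Consequently $H(n,\kappa)$ is connected with diameter exactly $n$, and every vertex has valency $n\cdot(\kappa-1)=\kappa$, so the graph is locally infinite.

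Next I would check distance-transitivity. The group $\mathrm{Sym}(\kappa)\wr\mathrm{Sym}(n)$ acts on $\kappa^n$ by permuting the values in each coordinate independently and by permuting the $n$ coordinates; every such map preserves Hamming distance and hence is an automorphism. This action is visibly vertex-transitive, and the stabilizer of the constant tuple $\mathbf 0$ — which still contains $\mathrm{Sym}(\kappa\setminus\{0\})$ acting in each coordinate together with all coordinate permutations — acts transitively on each sphere $\Gamma_i(\mathbf 0)$, namely on the set of tuples with exactly $i$ nonzero entries: a coordinate permutation moves the support of one such tuple onto that of another, and value-permutations then match the nonzero entries. Vertex-transitivity together with transitivity of a vertex stabilizer on every sphere yields transitivity on ordered pairs at each distance, i.e.\ distance-transitivity. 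Counting, $|\Gamma_i(\mathbf 0)|=\binom{n}{i}(\kappa-1)^i$, which is $1$ for $i=0$ and $\kappa$ for $1\le i\le n$ (a finite power of the infinite cardinal $\kappa$, up to a finite factor); hence the distance sequence is $1,\kappa,\dots,\kappa$ with $\kappa$ occurring $n$ times, as desired.

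It remains to prove primitivity, and here I would invoke \cref{lem:blocks} and \cref{blocksImplyWhole} directly rather than \cref{thm: smith}, which both avoids the extra hypotheses on valency and diameter there and handles all $n$ uniformly. Suppose $B$ is a block with $|B|\ge 2$, and pick distinct $u,v\in B$; put $i=d(u,v)\ge 1$. If $i=1$, then \cref{blocksImplyWhole} gives $B=V(\Gamma)$ at once. If $i\ge 2$, then \cref{lem:blocks} gives $\Gamma_i(u)\subseteq B$, so it suffices to exhibit two adjacent vertices lying in $\Gamma_i(u)$. With $u=\mathbf 0$, the tuple supported on coordinates $1,\dots,i$ with all those entries equal to $1$, and the tuple that agrees with it except that coordinate $1$ carries the value $2$, both have exactly $i$ nonzero entries and differ in exactly one coordinate; this uses only $\kappa\ge 3$, which holds since $\kappa$ is infinite. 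Thus $B$ contains an adjacent pair, and \cref{blocksImplyWhole} forces $B=V(\Gamma)$ again. Hence every block of $\Gamma$ is trivial, so $\Gamma$ is primitive.

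I expect the only real work to lie in the two ``routine'' verifications of the first two paragraphs — that graph distance coincides with Hamming distance (so that the diameter is $n$ and the spheres are exactly as described) and that a vertex stabilizer acts transitively on each sphere; once these are in hand, the cardinal arithmetic giving the distance sequence and the primitivity argument via Smith's lemmas are both immediate.
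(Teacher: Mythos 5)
Your proof is correct, but it is built on a different graph than the paper's. The paper realizes the sequence with the Johnson-type graph $\Gamma^{\kappa,n}$ on the $n$-element subsets of a set of size $\kappa$ (adjacent when the symmetric difference has size $2$), whereas you use the Hamming graph $H(n,\kappa)=K_\kappa\,\square\,\cdots\,\square\,K_\kappa$. Both are classical distance-transitive families, and your verifications (distance equals Hamming distance, the stabilizer of a vertex is transitive on each sphere, the cardinal arithmetic $\binom{n}{i}(\kappa-1)^i=\kappa$) are all sound. The more interesting divergence is in the primitivity argument: the paper invokes \cref{thm: smith} by checking that $\Gamma^{\kappa,n}$ is neither bipartite nor antipodal, which forces the hypotheses valency $>2$ and diameter $>2$ and hence a separate ad hoc argument for $n=2$; you instead apply \cref{lem:blocks} and \cref{blocksImplyWhole} directly, showing that every sphere $\Gamma_i(u)$ with $i\ge 2$ contains an adjacent pair (using only $\kappa\ge 3$), so any nontrivial block swallows a sphere and then the whole graph. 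This handles all $n$ uniformly, avoids the case split, and only uses the two lemmas the paper re-proves rather than the full strength of Smith's theorem — a genuinely cleaner route to the same conclusion.
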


\begin{proof}
Let $\kappa$ be an infinite cardinal and $n\in \N$.

Let $X$ be a set such that $|X| = \kappa$. Let $[X]^n := \{S\subset X : |S| = n\}$ denote the set of subsets of $X$ with $n$ elements.  Define an edge relation $E$ as follows: 

$$u E v \iff (u\setminus v)\cup (v\setminus u) = 2 $$ 

Let $\Gamma^{\kappa,n}:= ([X]^n,E)$. We want to show that $\Gamma^{\kappa,n}$ is distance-transitive. If $n = 1$ then $\Gamma^{\kappa,n}$ is the complete graph and this is trivial. Thus, let $n \geq 2$. 

Let $c,d\in V(\Gamma)^{\kappa,n}$ be such that $d(c,d) = m$. Let $\varphi$ be a bijection on $X$ sending the elements of $a\cap b$ to the elements of $c\cap d$. This is possible since $|a\cap b| = |c\cap d| = n-2m$. and sending elements of $a\setminus b$ to $c \setminus d$ and likewise $b \setminus a$ to $d \setminus c$. This bijection induces an automorphism on $\Gamma^{\kappa,n}$ sending $a$ to $b$ and $c$ to $d$. Thus $\Gamma^{\kappa,n}$ is distance-transitive.  

Since distance transitivity implies vertex transitivity, $\Gamma^{\kappa,n}$ is also vertex-transitive. 

Next we want to show that $\Gamma^{\kappa,n}$ is primitive. When $n=1$ this is trivial. Let $n = 2$. Assume $V(\Gamma) = B_0 \cup \dots \cup B_k$ or $V(\Gamma) = B_0 \cup B_2 \cup \cdots \cup B_{\lfloor \frac{n}{2}\rfloor} $ is a nontrivial block system of $\Gamma^{\kappa,n}$. Let $u,v\in B_0$ and let $u = \{u_0,u_1\}$ and $v = \{v_0,v_1\}$. The sets $u$ and $v$ must be disjoint, otherwise by \cref{blocksImplyWhole} the blocks are trivial. Note that $B_0$ cannot contain all vertices disjoint from both $u$ and $v$, since for any $x\in X\setminus (u\cup v)$ we have that $\{x,u_0\}$ is disjoint from $v$, but if $\{x,u_0\}$ were in $B_0$, it would give us a contradiction from \cref{blocksImplyWhole}. Thus there is a $w = \{w_0,w_1\}$ disjoint from $v$ which is not in $B_0$. Let $p$ be the permutation of $X$ swapping $v_0$ and $w_0$, $v_1$ and $w_1$ and fixing everything else. The induced automorphism of $\Gamma^{\kappa,n}$ switches $v$ and $w$ while fixing $u$, contradicting that this is a block system. 

Let $n \geq 3$. Since the diameter and valency are both greater than 2, we can use Smith's theorem.  Let $v = \{v_0,v_1,\dots,v_{n-1}\}$ be a vertex of $\Gamma^{\kappa,n}$. Let $b,c\in X\setminus v$. Consider $w_1 = \{b, v_1,\dots,v_{n-1}\}$, $w_2 = \{c, v_1,\dots,v_{n-1}\}$. Clearly $d(v,w_1) = d(w_1,w_2) = d(w_2,v) = 1$, thus $v,w_1,w_2$ create an odd cycle and so $\Gamma^{\kappa,n}$ is not bipartite. 

Let $v\in V(\Gamma^{\kappa,n})$, $u\in \Gamma^{\kappa,n}_n(v)$ such that $i = \{u_0,\dots, u_{n-1}\}$. Let $a \in X \setminus(v\cup w)$,. Consider $u' = \{a,u_1,\dots,u_{n-1}\}$. Then $w'\in \Gamma^{\kappa,n}_n(v)$ and $d(w,w') = 1$. Thus $\Gamma^{\kappa,n}$ is not antipodal. Then, by Smith's Theorem, $\Gamma^{\kappa,n}$ is primitive. 

All that is left to show is that $\Gamma^{\kappa,n}$ has the desired distance sequence. Let $0 < m \leq n$, $v\in V(\Gamma)$. Then we have that $\Gamma_m(v) = \{w\in V(\Gamma): |v\triangle w|= 2m\}$. Consider $w \mapsto w\setminus v$. This is a bijection from $\Gamma_m(v) \to [X]^m$. Thus $|\Gamma_m(v)|=|[X]^m|=\kappa$, and therefore the $m$'th element in the distance sequence is $d$. Since $m<n$ was arbitrary, the distance sequence for $\Gamma$ is $1,\kappa,\dots ,\kappa$. 
\end{proof}

\section{Countable Graphs}\label{sec: countable}

\noindent The goal of this section will be to prove the following theorem:
 \begin{theorem}\label{countable2}
    Let $\Gamma$ be a locally infinite but countable primitive graph. If $\Gamma$ has infinite diameter, its distance sequence is $1,\aleph_0,\aleph_0,\dots$. If $\Gamma$ has finite diameter then $\Gamma$ has distance sequence $1,\aleph_0,\dots,\aleph_0,a$ for $2\leq a \leq \aleph_0$.  
    \end{theorem}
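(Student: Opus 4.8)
The plan is to reduce everything to Pegden's classification and then eliminate the one remaining case by exhibiting a nontrivial block system. Since $\Gamma$ is primitive it is vertex-transitive, and since it is locally infinite and countable every vertex has degree $\aleph_0$ and $|V(\Gamma)|=\aleph_0$. Invoking the result of Pegden \cite{Pegden} with the cardinal $\aleph_0$: if $\Gamma$ has infinite diameter its distance sequence is $1,\aleph_0,\aleph_0,\dots$, which is exactly the assertion; and if $\Gamma$ has finite diameter $m\in\N$ its distance sequence is $1,\aleph_0,\dots,\aleph_0,a$ for some $a\le\aleph_0$. Because the diameter equals $m$ there is a pair of vertices at distance $m$, so by vertex-transitivity $\Gamma_m(v)\ne\varnothing$ for every $v$, i.e.\ $a\ge1$. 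Hence the entire content of the theorem is the inequality $a\ge2$, which I would prove by contradiction.

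So suppose $a=1$. Then for each vertex $v$ the set $\Gamma_m(v)$ is a single vertex, which I will call $v^{*}$, and $v^{*}\ne v$ since $m\ge1$. Define a relation $\sim$ on $V(\Gamma)$ by $v\sim w$ iff $v=w$ or $d(v,w)=m$; it is reflexive and symmetric by construction. For transitivity, suppose $v\sim w$ and $w\sim x$: if $v=w$ or $w=x$ the conclusion $v\sim x$ is immediate, and otherwise $d(v,w)=d(w,x)=m$ gives $v,x\in\Gamma_m(w)$, so $v=x$ because $|\Gamma_m(w)|=a=1$. Thus $\sim$ is an equivalence relation whose classes are exactly the two-element sets $\{v,v^{*}\}$. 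Moreover $\sim$ is an $\aut(\Gamma)$-congruence: an automorphism $\varphi$ preserves distances, so $d(\varphi(v),\varphi(v^{*}))=m$ forces $\varphi(v^{*})=\varphi(v)^{*}$ and hence $\varphi(\{v,v^{*}\})=\{\varphi(v),\varphi(v)^{*}\}$, again a class. So $\{\{v,v^{*}\}:v\in V(\Gamma)\}$ is a block system, each of its blocks has two elements, and since $\Gamma$ is infinite a two-element set is neither a singleton nor all of $V(\Gamma)$; hence the block system is nontrivial, contradicting the primitivity of $\Gamma$. Therefore $a\ge2$, which completes the proof.

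I do not anticipate a genuine obstacle. Pegden's classification supplies all the structure, and the elimination of $a=1$ is the familiar observation that a graph in which antipodes are unique is antipodal and therefore imprimitive --- the phenomenon behind the antipodal clause of \cref{thm: smith} --- except that here we need only vertex-transitivity (to apply Pegden's theorem) together with the fact that automorphisms preserve distance, so Smith's theorem itself is not invoked. The only points that call for any care are the verification that $\sim$ is transitive and the check that the two-element blocks are nontrivial, and both rely merely on $m\ge1$ and on $\Gamma$ being infinite.
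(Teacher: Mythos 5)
Your proposal is correct and follows essentially the same route as the paper: reduce to Pegden's classification and then rule out $a=1$ by observing that the pairs $\{v,v^{*}\}$ of mutually antipodal vertices form a nontrivial block system. Your write-up is in fact slightly more careful than the paper's on the transitivity of $\sim$ and on why two-element blocks are nontrivial, but the argument is the same.
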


It was shown in \cite{Pegden} that if $\Gamma$ is a locally infinite vertex-transitive graph of degree $\kappa$ with infinite diameter then it has distance sequence $1,\kappa,\kappa,\dots$ and if it is of finite degree, then it has distance sequence $1,\kappa,\dots,\kappa,\alpha$ for $\alpha\leq \kappa$. Thus, for countable graphs, it suffices to show that in the second case that $\alpha$, which in this case is a natural number, is greater than 1. 

\begin{proof}
Let $\Gamma$ be such a graph with distance sequence $1,\aleph_0,\dots,\aleph_0,1$ and diameter $n$. Let $v\in V(\Gamma)$. Then there is a unique $v'\in V(\Gamma)$ such that $d(v,v')= n$. For $v'$, $v$ is also the unique element at distance $n$. 

Define a relation $\sim$ where for $v,w\in V(\Gamma)$, $v\sim w\iff d(v,w) = n$  or $ d(v,w) = 0$. The relation $\sim$ is clearly symmetric and reflexive. The relation $\sim$ is also vacuously transitive since there are no distinct $u,v,w\in V(\Gamma)$ such that $u \sim v$ and $v\sim w$. Thus $\sim$ is an equivalence relation and its equivalence classes form a partition. 

The partition $V(\Gamma)/\sim$ is preserved by any automorphism since $\sim$ is defined by the distance and distance must be preserved. Therefore $V(\Gamma)/\sim$ is a block system of $\Gamma$ and thus $\Gamma$ is imprimitive.
\end{proof}

Next, we will prove a few more lemmas that will aid in our constructions. Since the automorphism group of a graph is the same as for its complement, we have \cref{lem:complement}. 

\begin{lemma}\label{lem:complement}
A graph $G$ is primitive if and only if its complement is primitive. 
\end{lemma}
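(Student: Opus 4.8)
The plan is to reduce the statement entirely to the observation that a graph and its complement share the same automorphism group, combined with the fact that primitivity is a property of the permutation group action alone.

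First I would verify that $\aut(G) = \aut(\overline{G})$, where $\overline{G}$ denotes the complement of $G$. A bijection $\varphi \colon V(G) \to V(G)$ is an automorphism of $G$ precisely when $\{u,v\} \in E(G) \iff \{\varphi(u),\varphi(v)\} \in E(G)$ for all distinct $u,v \in V(G)$; negating both sides of this biconditional shows it is equivalent to $\{u,v\} \in E(\overline{G}) \iff \{\varphi(u),\varphi(v)\} \in E(\overline{G})$, i.e. to $\varphi$ being an automorphism of $\overline{G}$. Since $V(G) = V(\overline{G})$ as sets, the two automorphism groups coincide as permutation groups on the same underlying set.

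Next I would recall that whether a permutation group $H$ acting on a set $X$ acts primitively depends only on the pair $(H,X)$: primitivity is precisely the nonexistence of a nontrivial $H$-congruence, and an $H$-congruence is defined purely in terms of the action. Hence $\aut(G)$ acts primitively on $V(G)$ if and only if $\aut(\overline{G})$ acts primitively on $V(\overline{G})$, which is exactly the assertion that $G$ is primitive if and only if $\overline{G}$ is primitive.

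There is no substantive obstacle; the only point requiring any care is the first step, and it is routine. Equivalently, one could phrase the argument directly at the level of block systems: a nontrivial block system of $G$ is, verbatim, a nontrivial block system of $\overline{G}$, and conversely, so neither graph is primitive whenever the other fails to be.
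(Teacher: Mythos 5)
Your argument is correct and is exactly the paper's approach: the paper justifies this lemma with the single observation that a graph and its complement have the same automorphism group, and primitivity depends only on that group's action on the vertex set. Your write-up simply fills in the routine verification.
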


\begin{lemma}\label{lem:extend}
If there exists a primitive graph with distance sequence $1,\aleph_0,a$ for some $a\in \N$, then there exists a primitive graph with distance sequence $1,\aleph_0,\dots,\aleph_0,a $ and diameter $m+1$ for any $m\in\N$.  
\end{lemma}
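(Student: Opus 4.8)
The plan is to take a primitive graph $\Delta$ realizing the distance sequence $1,\aleph_0,a$ (so $\Delta$ has diameter $2$) and build a new graph $\Gamma$ of diameter $m+1$ by "stretching" $\Delta$ along a tree-like scaffold while keeping the local structure that made $\Delta$ primitive visible at the far end. Concretely, I would first handle the base reduction: by \cref{lem:extend} it suffices to pass from diameter $2$ to diameter $3$, i.e.\ to produce a primitive graph with distance sequence $1,\aleph_0,\aleph_0,a$, and then iterate. So the real content is one step of the induction.

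For that step, I would use a gluing construction reminiscent of the "triangle tree" in \cref{thm: triangle tree}. Start from $\Delta$ on vertex set $V(\Delta)$. The idea is: take countably many (or $\aleph_0$-many) disjoint copies of $\Delta$, and build a bipartite-like incidence scaffold so that each vertex of $\Gamma$ "sees" a full copy of $\Delta$ at maximal distance. The cleanest version: let $\Gamma$ have vertex set consisting of the copies arranged so that a vertex $v$ has, at distance $m+1$, exactly the $a$ vertices that a corresponding vertex of $\Delta$ has at distance $2$; at distances $1,\dots,m$ each vertex has $\aleph_0$ neighbors because of the branching. I would make the automorphism group large by insisting the scaffold itself is highly symmetric (e.g.\ the combinatorial tree of copies is itself vertex- and edge-transitive in the appropriate sense), and then show distance-transitivity of $\Gamma$ directly, the way it is done for $\Gamma^\kappa$ and $\Gamma^{\kappa,n}$: any shortest path decomposes canonically into "scaffold steps" followed by a final $\Delta$-step, and an automorphism can be assembled piece by piece to carry one such decomposition to another.

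Once distance-transitivity is in hand, primitivity follows from (infinite) Smith's theorem, \cref{thm: smith}, provided I check valency $>2$ (true, since valency is $\aleph_0$), diameter $>2$ (true, it is $m+1 \ge 3$), not bipartite, and not antipodal. Non-bipartiteness I would arrange by building an odd cycle into the construction (either inherited from an odd cycle in $\Delta$, or forced by using triangles in the scaffold as in \cref{thm: triangle tree}); if $\Delta$ itself is bipartite one can instead work with $\Delta$'s complement via \cref{lem:complement}, or add a triangle gadget. Non-antipodality I would get from the fact that the set of vertices at distance $m+1$ from $v$ has size $a \ge 2$ and is \emph{not} mutually at distance $m+1$: two such vertices come from the same copy of $\Delta$ and sit at distance $2$ there, hence at distance $\le$ (something small) in $\Gamma$, which is $< m+1$ once $m+1 > 2$. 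Finally, the distance-sequence computation is a direct count: the scaffold contributes $\aleph_0$ at each intermediate distance and the terminal $\Delta$-fiber contributes $a$.

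The main obstacle I expect is making the "stretching" gluing precise enough that it is genuinely distance-transitive rather than merely vertex-transitive — in particular, ensuring that the canonical decomposition of a geodesic into scaffold-part plus $\Delta$-part is unique (no shortcuts through a neighboring copy) and that automorphisms of $\Delta$ extend to automorphisms of $\Gamma$ that act transitively enough on the scaffold. A secondary subtlety is the bipartite/antipodal check when $\Delta$ is degenerate (e.g.\ when $a$ is small), where one may need the complement trick of \cref{lem:complement} or an explicit triangle gadget to rule out bipartiteness; handling those degenerate $a$ uniformly is the fiddly part of the argument.
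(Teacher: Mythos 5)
There is a genuine gap here: the proposal never actually produces a graph. Everything hinges on an unspecified ``scaffold'' of copies of $\Delta$ arranged so that each vertex sees $\aleph_0$ vertices at distances $1,\dots,m$ and exactly $a$ vertices at distance $m+1$, and you yourself flag that making this precise is the main obstacle --- but that \emph{is} the lemma; without the construction there is nothing to verify. Worse, the route you propose for primitivity cannot work as stated. You plan to prove that the glued graph is distance-transitive and then invoke Smith's theorem (\cref{thm: smith}), but the hypothesis only gives you a \emph{primitive} graph $\Delta$ with distance sequence $1,\aleph_0,a$, not a distance-transitive one. Distance-transitivity of your $\Gamma$ would require, for instance, that automorphisms act transitively on pairs lying in the terminal $\Delta$-fiber at distance $m+1$, which amounts to a homogeneity assumption on $\Delta$ that primitivity does not supply; if you could build such a highly symmetric $\Gamma$ you would not need $\Delta$ as input at all. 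Finally, gluing disjoint copies of $\Delta$ along a tree-like or bipartite-like incidence structure is exactly the kind of construction that tends to \emph{create} nontrivial block systems (the copies, or the fibers over scaffold vertices, are natural candidates), so primitivity is the point most in danger, and you have no argument for it once Smith's theorem is unavailable.

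For contrast, the paper avoids all of this by never changing the vertex set. It passes to the complement $\Gamma'$ of the given graph (which has finite degree $a$, hence long geodesics) and defines a new adjacency $v\mathrel{E^m}w$ iff $d'(v,w)>1$ and $d'(v,w)\equiv 1\pmod m$. Primitivity is then immediate, since $\aut(\Gamma)=\aut(\Gamma')\subseteq\aut(\Gamma^m)$ and a group containing a primitive group is primitive; no distance-transitivity, Smith's theorem, bipartiteness or antipodality check is needed. The distance count is the only work: intermediate levels are infinite, and the $a$ old $\Gamma'$-neighbors of $v$ become exactly the vertices at new distance $m+1$. If you want to salvage your approach, you would need either an explicit scaffold together with a direct (automorphism-group-level) primitivity argument that does not pass through distance-transitivity, or else a reduction like the paper's in which the automorphism group of the new graph visibly contains that of the old one.
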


\begin{proof}
Let $\Gamma$ be a graph with distance sequence $1,\aleph_0,a$. Let $\Gamma'$ be its complement. Let $d'$ denote the distance function of $\Gamma'$.
Define an edge relation $E^m$ on $V(\Gamma)$ as follows: 

$$v E^m w \iff d'(v,w)>1 \textrm{ and } d'(v,w)  \equiv 1 \pmod{m}$$

Consider the graph $\Gamma^{m} = (V(\Gamma),E^{m})$. Let $d^{m}$ denote distance in this graph.  It is clear that $\Gamma^{m}$ is primitive, since
$ \textrm{Aut} (\Gamma) = 
\textrm{Aut}  (\Gamma')\subseteq  \textrm{Aut}  (\Gamma^m)$. All that is left to show is that $\Gamma^m$ has distance sequence $1,\aleph_0, \dots, \aleph_0,a $.

Given any $v\in V(\Gamma)$, there are infinitely many vertices in  $\bigcup _{k\in \N\setminus\{0\}}\Gamma'_{mk+1}(v)$, so there are infinitely many vertices at $d^m$-distance 1 from $v$. There are infinitely many vertices at $d'$-distance 1 mod $n$ from any vertex in $\bigcup _{k\in \N\setminus\{0\}}\Gamma'_{mk+1}(v)$, so there are again infinitely many vertices at $d^m$-distance 2 from $v$. The same reasoning applies to any entry of the distance sequence until the last entry. Notice any vertex $u$ adjacent to $v$ in $\Gamma'$ are at $d^m$-distance $m+1$ from $v$ since to get from $u$ to $v$ in $\Gamma^m$, the path would have to use three edges to get from $u$ to a point at $d'$-distance $m^2+1$ from $v$, which would then be at $d^m$-distance 1 from $v$. This is clearly the shortest path and thus $d'(u,v) = m+1$. Since the vertices adjacent to $v$ in $\Gamma'$ are the only vertices at  $d^m$-distance $m+1$ from $v$, the distance sequence of $\Gamma^m$ would be $1,\aleph_0,\dots, \aleph_0, a$.  
\end{proof}

Next we will give a construction for primitive graphs with distance sequence $1,\aleph_0,\dots,\aleph_0,a$ for $a$ composite and diameter $n$. 

\begin{theorem}\label{composite}
There exists a vertex-transitive, primitive graph with distance sequence $1,\aleph_0,n$ for composite $n >4$. 
\end{theorem}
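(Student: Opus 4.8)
The plan is to pass to the complement and reduce the problem to constructing a suitable regular graph. I claim it suffices to build a countably infinite, $n$-regular, primitive, vertex-transitive graph $\Gamma'$; then its complement $\Gamma$ is the graph we want. Indeed, by \cref{lem:complement} $\Gamma$ is primitive, and $\Gamma$ has the same automorphism group as $\Gamma'$, so it is vertex-transitive. To read off the distance sequence, fix $v\in V(\Gamma)$: the vertices that are neither equal nor adjacent to $v$ in $\Gamma$ are exactly the $n$ elements of $N_{\Gamma'}(v)$, so $v$ has $\aleph_0$ neighbours in $\Gamma$. Moreover, for each $x\in N_{\Gamma'}(v)$ any vertex lying outside the finite set $\{v,x\}\cup N_{\Gamma'}(v)\cup N_{\Gamma'}(x)$ is a common $\Gamma$-neighbour of $v$ and $x$, so $d_\Gamma(v,x)=2$. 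Since $\{v\}$, the $\Gamma$-neighbours of $v$, and the $n$ vertices of $N_{\Gamma'}(v)$ exhaust $V(\Gamma)$, the graph $\Gamma$ has diameter $2$ and distance sequence $1,\aleph_0,n$.

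It remains to construct $\Gamma'$. Since $n$ is composite, write $n=r(s-1)$ with $r\ge 2$ and $s\ge 3$. Let $\Gamma'$ be the ``tree of $K_s$'s with $r$ cliques through each vertex'', built just as the triangle-tree of \cref{thm: triangle tree} but using copies of $K_s$ instead of triangles and adjoining $r$ (rather than $\kappa$) cliques at each vertex: start with a single vertex, and at each stage glue onto every vertex currently lying in fewer than $r$ cliques enough fresh disjoint copies of $K_s$ to bring it into exactly $r$ cliques, each new copy sharing only that vertex with the rest of the graph; take the union over $\omega$ stages. Then $\Gamma'$ is connected, countably infinite and $n$-regular, any two distinct cliques meet in at most one vertex, and hence every edge of $\Gamma'$ lies in a unique copy of $K_s$.

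To finish I would show $\Gamma'$ is primitive using Smith's Theorem. First, $\Gamma'$ is distance-transitive by exactly the argument used for the triangle-tree in \cref{thm: triangle tree}: a geodesic between two vertices runs through a uniquely determined chain of cliques meeting successively at cut-vertices, so if $d(u,v)=d(u,w)$ one can send the clique-chain of a geodesic from $u$ to $v$ onto that of a geodesic from $u$ to $w$ and extend this to an automorphism of $\Gamma'$ using the homogeneity of the clique-tree (this graph also appears in Macpherson's classification of locally finite distance-transitive graphs). Now $\Gamma'$ has valency $n=r(s-1)\ge 4>2$ and infinite diameter, so the infinite form of \cref{thm: smith} applies; $\Gamma'$ contains $K_s$ with $s\ge 3$ and hence an odd cycle, so it is not bipartite, and having infinite diameter it is not antipodal. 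Therefore $\Gamma'$ is primitive, and by the first paragraph $\Gamma=\overline{\Gamma'}$ is a primitive vertex-transitive graph with distance sequence $1,\aleph_0,n$, as required.

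The only step that calls for real care is the distance-transitivity of $\Gamma'$ — verifying that an automorphism matching up two geodesics genuinely extends to all of $\Gamma'$. This is, however, the verbatim generalisation of the computation already done for $\Gamma^\kappa$ with $K_s$ in place of the triangle, so it should go through with no new ideas, and the remaining steps (the complement bookkeeping and the appeal to Smith's Theorem) are routine.
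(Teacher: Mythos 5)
Your proposal is correct and follows essentially the same route as the paper: realize the graph as the complement of the $r$-tree of $s$-cliques with $n=r(s-1)$, establish distance-transitivity of that clique-tree, rule out bipartiteness and antipodality to get primitivity from Smith's Theorem, and pass to the complement via \cref{lem:complement}. You additionally spell out the diameter-$2$ computation for the complement, which the paper leaves implicit, but the construction and the key appeals are the same.
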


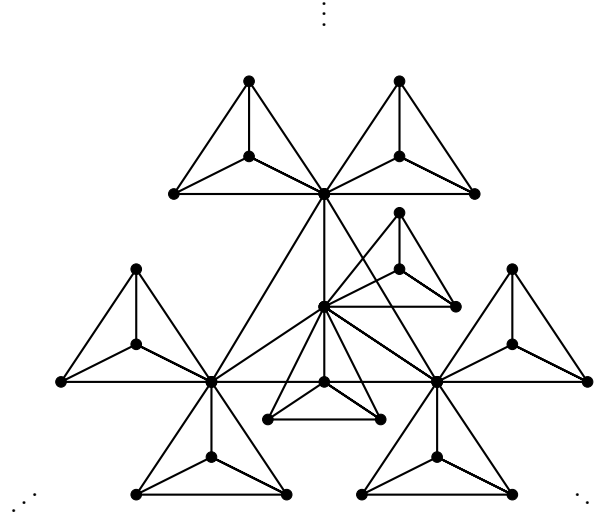
\begin{figure}
\begin{center}
\begin{tikzpicture}[scale=1]
\node[fill = white, draw = white] at (2,2.5) {$ \vdots$};

\node[fill = white, draw = white] at (-2,-4) {$\iddots$};

\node[fill = white, draw = white] at (5.5,-4) {$\ddots$};

\draw[fill=black] (0,0) circle (2pt);
\draw[fill=black] (2,0) circle (2pt);
\draw[fill=black] (1,.5) circle (2pt);
\draw[fill=black] (1,1.5) circle (2pt);

\draw[fill=black] (2,0) circle (2pt);
\draw[fill=black] (4,0) circle (2pt);
\draw[fill=black] (3,.5) circle (2pt);
\draw[fill=black] (3,1.5) circle (2pt);

\draw[fill=black] (.5,-2.5) circle (2pt);
\draw[fill=black] (3.5,-2.5) circle (2pt);
\draw[fill=black] (2,-1.5) circle (2pt);
\draw[fill=black] (2,0) circle (2pt);

\draw[fill=black] (-1.5,-2.5) circle (2pt);
\draw[fill=black] (.5,-2.5) circle (2pt);
\draw[fill=black] (-.5,-2) circle (2pt);
\draw[fill=black] (-.5,-1) circle (2pt);

\draw[fill=black] (-.5,-4) circle (2pt);
\draw[fill=black] (1.5,-4) circle (2pt);
\draw[fill=black] (.5,-3.5) circle (2pt);
\draw[fill=black] (.5,-2.5) circle (2pt);

\draw[fill=black] (3.5,-2.5) circle (2pt);
\draw[fill=black] (5.5,-2.5) circle (2pt);
\draw[fill=black] (4.5,-2) circle (2pt);
\draw[fill=black] (4.5,-1) circle (2pt);

\draw[fill=black] (2.5,-4) circle (2pt);
\draw[fill=black] (4.5,-4) circle (2pt);
\draw[fill=black] (3.5,-3.5) circle (2pt);
\draw[fill=black] (3.5,-2.5) circle (2pt);

\draw[fill=black] (2,-1.5) circle (2pt);
\draw[fill=black] (3.75,-1.5) circle (2pt);
\draw[fill=black] (3,-1) circle (2pt);
\draw[fill=black] (3,-.25) circle (2pt);

\draw[fill=black] (1.25,-3) circle (2pt);
\draw[fill=black] (2.75,-3) circle (2pt);
\draw[fill=black] (2,-2.5) circle (2pt);
\draw[fill=black] (2,-1.5) circle (2pt);

\draw[thick] (0,0) -- (2,0) -- (1,.5) -- (0,0) -- (1,1.5) -- (2,0) -- (1,.5) -- (1,1.5);
\draw[thick] (2,0) -- (4,0) -- (3,.5) -- (2,0) -- (3,1.5) -- (4,0) -- (3,.5) -- (3,1.5);
\draw[thick] (.5,-2.5) -- (3.5,-2.5) -- (2,-1.5) -- (.5,-2.5) -- (2,0) -- (3.5,-2.5) -- (2,-1.5) -- (2,0);
\draw[thick] (-1.5,-2.5) -- (.5,-2.5) -- (-.5,-2) -- (-1.5,-2.5) -- (-.5,-1) -- (.5,-2.5) -- (-.5,-2) -- (-.5,-1);
\draw[thick] (-.5,-4) -- (1.5,-4) -- (.5,-3.5) -- (-.5,-4) -- (.5,-2.5) -- (1.5,-4) -- (.5,-3.5) -- (.5,-2.5);
\draw[thick] (3.5,-2.5) -- (5.5,-2.5) -- (4.5,-2) -- (3.5,-2.5) -- (4.5,-1) -- (5.5,-2.5) -- (4.5,-2) -- (4.5,-1);
\draw[thick] (2.5,-4) -- (4.5,-4) -- (3.5,-3.5) -- (2.5,-4) -- (3.5,-2.5) -- (4.5,-4) -- (3.5,-3.5) -- (3.5,-2.5);
\draw[thick] (2,-1.5) -- (3.75,-1.5) -- (3,-1) -- (2,-1.5) -- (3,-.25) -- (3.75,-1.5) -- (3,-1) -- (3,-.25);
\draw[thick] (1.25,-3) -- (2.75,-3) -- (2,-2.5) -- (1.25,-3) -- (2,-1.5) -- (2.75,-3) -- (2,-2.5) -- (2,-1.5);
\end{tikzpicture}
\end{center}
\caption{A finite but enlightening subgraph of the 3-tree of 4-cliques}
\end{figure}

\begin{proof}
Let $n = a(b-1)$ for $a,b\in \N$ such that $a,b > 2$. Consider the $a$-tree of $b$-cliques. This is the tree one would get by starting with a $b$-clique and at step 1 and adding $(a-1)$-many more $b$-cliques at each vertex of degree $b-1$. At the next step, start with this resulting graph and again add $(a-1)$-many more $b$-cliques at each vertex of degree $b-1$. The resulting graph after infinitely many steps is the $a$-tree of $b$-cliques, denoted $T(a,b)$.

Note that the degree of $T(a,b)$ is $ab$. It is known from \cite{Babai} that such a tree is distance-transitive. Since it contains odd cycles it is not bipartite, and since it has infinite diameter it is not antipodal. Thus, by Smith's Theorem, it is primitive. 

Then, its compliment is a vertex primitive graph with distance sequence $1,\aleph_0, a(b-1)$. 
\end{proof}

\cref{composite} results in the following corollary. 
\begin{cor}
There exists a primitive graph with distance sequence $1,\aleph_0,\dots,\aleph_0,a$ of diameter $n$ for any  composite $a>4$ and $n\in \N$
\end{cor}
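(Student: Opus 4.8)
The plan is to read this corollary as an immediate consequence of the two results that precede it. \cref{composite} supplies, for every composite $a>4$, a vertex-transitive primitive graph whose distance sequence is $1,\aleph_0,a$ --- that is, the diameter-$2$ instance of the claim. \cref{lem:extend} is precisely the tool that promotes such a diameter-$2$ example to one of any prescribed larger diameter while keeping the terminal entry equal to $a$. So the whole argument is a one-line composition of the two.

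In detail: fix a composite $a>4$ and $n\in\N$. If $n=1$ the complete graph $K_{a+1}$ already works (it is primitive and has distance sequence $1,a$), and if $n=2$ the desired graph is furnished directly by \cref{composite}. For $n\geq 3$ I would first apply \cref{composite} to obtain a primitive graph $\Gamma$ with distance sequence $1,\aleph_0,a$, and then feed $\Gamma$ into \cref{lem:extend} with the parameter $m=n-1$. Since $\Gamma$ is primitive, has distance sequence $1,\aleph_0,a$, and $a\in\N$, the hypotheses of \cref{lem:extend} are met, and it returns a primitive graph with distance sequence $1,\aleph_0,\dots,\aleph_0,a$ (the symbol $\aleph_0$ occurring $n-1$ times) and diameter $(n-1)+1=n$. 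This is exactly the graph the corollary asks for.

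There is no real obstacle here; the only thing to get right is the indexing, namely that \cref{lem:extend} produces diameter $m+1$, so one must choose $m=n-1$ in order to land on diameter $n$, and that with this choice the number of interior $\aleph_0$ entries is $n-1$, matching the statement. Primitivity of the lifted graph and the precise shape of its distance sequence are already part of the conclusion of \cref{lem:extend} --- with primitivity passing up through the inclusion $\aut(\Gamma')\subseteq\aut(\Gamma^m)$ for the complement $\Gamma'$ --- so nothing further needs to be verified.
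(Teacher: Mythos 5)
Your proposal is correct and matches the paper's own argument, which likewise obtains the corollary by combining \cref{composite} with \cref{lem:extend}. The only addition is your explicit bookkeeping of the parameter $m=n-1$ and the small-$n$ cases, which the paper leaves implicit.
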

This corollary comes from combining \cref{lem:extend} with \cref{composite}.

\section{Further Research}

   It would be of interest to prove the following conjectures. Doing so would complete the classification of distance sequences for primitive graphs. 
   
    \begin{conj}
    There exists a primitive graph with distance sequence $1,\aleph_0,4$.
    \end{conj}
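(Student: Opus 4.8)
The plan is to follow the template of \cref{composite}: by \cref{lem:complement} it is enough to produce a primitive distance-transitive graph $H$ of valency $4$ and infinite diameter, for then $\Gamma := H^{c}$ is primitive, and a direct count gives its distance sequence. Indeed, in $\Gamma$ each vertex $v$ is adjacent to all vertices except itself and its four $H$-neighbours, so $v$ has $\aleph_0$ neighbours; and since $H$ is infinite of bounded valency, for every $H$-edge $\{u,v\}$ there is a vertex adjacent (in $H$) to neither of $u,v$, hence a common $\Gamma$-neighbour of $u$ and $v$, so the four $H$-neighbours of $v$ all lie at $\Gamma$-distance exactly $2$. Thus $\Gamma$ has diameter $2$ and distance sequence $1,\aleph_0,4$, and \cref{lem:extend} would then upgrade this to diameter $m+1$ for every $m$.

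The natural candidate for $H$ is the ``$2$-tree of triangles'' $T(2,3)$ in the sense of \cref{composite}, which is exactly the line graph $L(T_3)$ of the $3$-regular tree $T_3$: the vertices of $L(T_3)$ are the edges of $T_3$, each vertex of $T_3$ contributes a triangle, each edge of $T_3$ lies in exactly two of these triangles, and the incidence pattern of the triangles reproduces $T_3$. This graph has valency $4$ and infinite diameter. To see it is distance-transitive it suffices to note that $\aut(T_3)\le\aut(L(T_3))$, that two edges $e,f$ of $T_3$ are at $L(T_3)$-distance $i$ precisely when they are the two extreme edges of a path of length $i+1$ in $T_3$, and that $\aut(T_3)$ is transitive on paths of any fixed length (the regular tree is $s$-arc-transitive for all $s$); hence $\aut(T_3)$ — and a fortiori $\aut(L(T_3))$ — is transitive on ordered pairs of vertices at each distance. (Alternatively, distance-transitivity of $T(a,b)$ is attributed to \cite{Babai}; here one only needs the argument to cover the boundary case $a=2$.)

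Primitivity of $H=L(T_3)$ then follows from the (infinite) Smith's theorem \cref{thm: smith}: its valency is $4>2$ and its diameter is $>2$; it contains triangles, so it is not bipartite; and it has infinite diameter, so it is not antipodal. Hence it is primitive, and by \cref{lem:complement} so is $\Gamma=L(T_3)^{c}$, which by the count above has the desired distance sequence $1,\aleph_0,4$. The step that genuinely needs care — and which is plausibly the reason \cref{composite} was stated only for $a,b>2$ — is the boundary case $a=2$: one must verify that $T(2,3)$ is honestly distance-transitive and that Smith's theorem legitimately applies to it in the locally infinite setting. The line-graph description makes the distance-transitivity transparent, so I expect this to go through; were it to fail, the real obstacle would be to exhibit by some other means a primitive graph of valency $4$ and infinite diameter (equivalently, a primitive graph of diameter $2$ in which every vertex has exactly four vertices at distance $2$).
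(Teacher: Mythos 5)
This statement is posed in the paper as an open conjecture --- no proof is given --- so there is nothing of the paper's to compare your argument against; I have therefore checked it on its own terms, and it appears to be a correct construction that would settle the conjecture affirmatively. Your graph $H = T(2,3) = L(T_3)$ is precisely the boundary case excluded from \cref{composite}, whose hypotheses $a,b>2$ force $a(b-1)\ge 6$ and so miss $4$, and the two points that genuinely need care are the ones you single out. First, distance-transitivity of $L(T_3)$: your reduction is sound, because a pair of edges of a tree at line-graph distance $i$ determines a unique path with $i+1$ edges having them as its extreme edges (and conversely), and $\aut(T_3)$ is transitive on directed paths of each fixed length; so you do not need the citation to \cite{Babai} to cover $a=2$. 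Second, primitivity: Smith's theorem applies exactly as in \cref{composite} (valency $4>2$, infinite diameter, triangles rule out bipartiteness, infinite diameter rules out antipodality), but it is worth noting you can bypass \cref{thm: smith} entirely using only \cref{lem:blocks} and \cref{blocksImplyWhole}: any block $B$ containing an edge $e$ is a union of distance classes $\Gamma_i(e)$, and for every $i\ge 2$ the class $\Gamma_i(e)$ contains two adjacent vertices (the two edges of $T_3$ leaving the far endpoint of a path of length $i$ issuing from $e$), so any nontrivial block is all of $V(H)$; this makes the construction independent of the unproved infinite extension of Smith's theorem. The complement count is routine: each vertex of $L(T_3)^{c}$ has exactly its four $H$-neighbours as non-neighbours, and any two vertices of $H$ have at most eight $H$-neighbours between them while $H$ is infinite, so those four sit at distance exactly $2$, giving $1,\aleph_0,4$. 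Two small caveats: \cref{lem:extend}, which you invoke at the end, is not needed for the conjecture as stated and is the least convincing proof in \cref{sec: countable}, so I would not lean on it; and since your argument resolves a question the author left open, it warrants an independent check that no $\aut(L(T_3))$-congruence on $E(T_3)$ has been overlooked --- the block-lemma argument above is the cleanest way to certify that.
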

    
    \begin{conj}
    There does not exist a primitive graph with distance sequence $1,\aleph_0,p$ for primes $p$. 
    \end{conj}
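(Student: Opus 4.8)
The plan is to prove the conjecture by passing to the complement, which turns it into a clean structural question about locally finite graphs. Note first that any graph with distance sequence $1,\aleph_0,p$ is automatically countable (its vertex set has size $1+\aleph_0+p=\aleph_0$), locally infinite, and of diameter $2$. Suppose such a $\Gamma$ were primitive, and let $\Delta$ be its complement. By \cref{lem:complement}, $\Delta$ is primitive, hence connected and vertex-transitive. The neighbours of a vertex $v$ in $\Delta$ are exactly the vertices non-adjacent to $v$ in $\Gamma$, i.e.\ the $p$ vertices at $\Gamma$-distance $2$ from $v$; thus $\Delta$ is a connected, countably infinite, \emph{locally finite}, vertex-transitive graph of degree $p$. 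Conversely, the complement of any such $\Delta$ has diameter $2$ (two $\Delta$-adjacent vertices have infinitely many common non-neighbours, since $\Delta$ is locally finite and infinite) and distance sequence $1,\aleph_0,p$, and is primitive by \cref{lem:complement}. So the conjecture is \emph{equivalent} to the statement: no connected, countably infinite, locally finite, vertex-transitive graph of prime degree $p$ is primitive. I would prove the conjecture in this form.

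Two special cases then fall out immediately. If $p=2$, the only connected infinite locally finite vertex-transitive graph of degree $2$ is the two-way infinite path, whose partition into its two parity classes is an $\aut$-invariant block system, so it is imprimitive. More generally, if $\Delta$ is bipartite then, since it is connected, every automorphism either fixes or interchanges the two parts; the partition into parts is therefore an $\aut(\Delta)$-invariant equivalence relation with two infinite blocks, so $\Delta$ is imprimitive. This disposes of all bipartite candidates, in particular of every regular tree of prime degree, and is consistent with the bipartite/antipodal dichotomy of \cref{thm: smith}. It remains to treat the case in which $p$ is an odd prime and $\Delta$ contains an odd cycle.

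For the remaining case I would invoke the theory of ends of locally finite vertex-transitive graphs: such a $\Delta$ has $1$, $2$, or infinitely many ends. When $\Delta$ has more than one end it is accessible, and the canonical tree-decomposition associated with its finite separators is invariant under $\aut(\Delta)$; the orbits of the associated structure tree (equivalently, the fibres of the quotient by this decomposition) should furnish a nontrivial $\aut(\Delta)$-congruence, hence a block system. Concretely, a $2$-ended graph is quasi-isometric to $\mathbb{Z}$ and carries finite blocks arranged along a line, while an infinitely-ended graph splits equivariantly over a finite separator; in both situations one extracts nontrivial blocks, so $\Delta$ is imprimitive. Consistency checks support this: the $3$-regular tree and the Cayley graph of $\mathbb{Z}/2\ast\mathbb{Z}/3$ (both infinitely-ended) are indeed imprimitive.

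The main obstacle is the $1$-ended case, which is exactly where the conjecture is open: no tree-like separation is available and, a priori, the full automorphism group could act primitively. The route I would attempt is to study the local action $L:=\aut(\Delta)_v^{N(v)}\le S_p$ on the $p$ neighbours of a vertex. If $L$ is transitive, then by Burnside's theorem $L$ is either $2$-transitive or a Frobenius group inside $\mathrm{AGL}_1(p)$; in either case $L$ contains a $p$-cycle, and I would try to combine this local rigidity with $1$-endedness, using growth or isoperimetric estimates for vertex-transitive graphs, to locate a finite $\aut$-invariant configuration (a system of cliques, as for the complement of $T(a,b)$ in \cref{composite}) yielding a block system, or to contradict primitivity. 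The genuine difficulty is twofold: first, when $L$ is \emph{intransitive}, the edge set of $\Delta$ splits into several connected orbital graphs of smaller valency (each connected by Higman's primitivity criterion), and these smaller valencies may be composite, for which primitive examples do exist by \cref{composite}, so the prime case does not reduce to a smaller prime; second, even for transitive $L$ there is no known uniform reason why a $1$-ended, non-bipartite, prime-valency vertex-transitive graph (for example the degree-$5$ snub-square tiling) must be imprimitive. Making this final step unconditional is precisely the crux, and is what distinguishes the prime case from the composite case settled by \cref{composite}.
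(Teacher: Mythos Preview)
The statement is presented in the paper's ``Further Research'' section as an open \emph{conjecture}; the paper offers no proof, so there is nothing on that side to compare against. Your proposal is likewise not a proof, and you acknowledge this: after a correct reformulation via \cref{lem:complement}---the conjecture is equivalent to asserting that no connected, countably infinite, locally finite, vertex-transitive graph of prime valency is primitive---you dispose of the bipartite case (hence $p=2$ and all trees) and then explicitly concede that the one-ended, non-bipartite case is the crux and remains open.

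There is, moreover, a genuine error in your intermediate step. You argue that a locally finite vertex-transitive graph with more than one end must be imprimitive, via canonical tree-decompositions. This is false in general: the graphs $T(a,b)$ of \cref{composite}, the $a$-trees of $b$-cliques, are connected, locally finite, vertex-transitive, have infinitely many ends, and are proved primitive in the paper itself (they are distance-transitive, non-bipartite, non-antipodal, so Smith's theorem applies). Their valency $a(b-1)$ is composite, so they do not contradict the conjecture, but they show that ``multi-ended $\Rightarrow$ imprimitive'' cannot hold without already using the primality of $p$ in an essential way---the $\aut$-invariant structure tree exists for $T(a,b)$ too, yet yields no nontrivial block system on vertices. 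Thus both the multi-ended and the one-ended cases remain open in your outline, and the conjecture stands exactly as the paper leaves it.
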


\section{Acknowledgements}
 \noindent I would like to thank Wes Pegden for invaluable help and edits.

\end{document}